\newtheorem{thm}{Theorem}[section]
\newtheorem{lem}[thm]{Lemma}
\newtheorem{cor}[thm]{Corollary}
\def\XXint#1#2#3{{\setbox0=\hbox{$#1{#2#3}{\int}$ }
\vcenter{\hbox{$#2#3$ }}\kern-.6\wd0}}
\theoremstyle{definition}
\newtheorem{definition}[thm]{Definition}
\theoremstyle{remark}
\newtheorem{remark}[thm]{Remark}
\numberwithin{equation}{section}
\newtheoremstyle{ser}
{8pt}
{8pt}
{\it}
{}
{\sf}
{:}
{6mm}
{}
\newtheoremstyle{serr}
{8pt}
{8pt}
{\normalfont}
{}
{\sf}
{.}
{6mm}
{}
\theoremstyle{ser}
\theoremstyle{serr}
\begin{document}


\title{An Operator Inequality for Range Projections}


\author{Soumyashant Nayak}

\address{Smilow Center for Translational Research, University of Pennsylvania, Philadelphia, PA 19104}
\email{nsoum@upenn.edu}
\urladdr{https://nsoum.github.io} 




\begin{abstract}
By a result of Lundquist-Barrett it follows that the rank of a positive semi-definite matrix is less than or equal to the sum of the ranks of its principal diagonal submatrices when written in block form. In this article, we take a general operator algebraic approach which provides insight as to why the above rank inequality resembles the Hadamard-Fischer determinant inequality in form, with multiplication replaced by addition. It also helps in identifying the necessary and sufficient conditions under which equality holds. Let $\mathscr{R}$ be a von Neumann algebra, and $\Phi$ be a normal conditional expectation from $\mathscr{R}$ onto a von Neumann subalgebra $\mathscr{S}$ of $\mathscr{R}$. Let $\mathfrak{R}[T]$ denote the range projection of an operator $T$. For a positive operator $A$ in $\mathscr{R}$, we prove that $\Phi(\mathfrak{R}[A]) \le \mathfrak{R}[\Phi(A)]$ with equality if and only if $\mathfrak{R}[A] \in \mathscr{S}$. 
\end{abstract}






\maketitle

\section{Introduction}

Let $\langle n \rangle := \{1, 2, \cdots, n \}$. For a matrix $A \in M_n(\mathbb{C})$ and non-empty indexing sets $\alpha, \beta \subseteq \langle n \rangle $, we denote by $A[\alpha, \beta ]$ the submatrix of $A$ obtained by considering the intersection of rows indicated by $\alpha$ and columns indicated by $\beta$. The principal diagonal block $A[\alpha, \alpha]$ is denoted by $A[\alpha]$. Let $\alpha ^c$ be the complement of $\alpha$ in $\langle n \rangle$. The Hadamard-Fischer determinant inequality states that 
\begin{equation}
\det (A) \le \det (A[\alpha]) \cdot \det (A[\alpha ^c]),
\end{equation}
with equality if and only if $A = A[\alpha] \oplus A[\alpha ^c].$ Extensions of the above determinant inequality to finite von Neumann algebras were studied in \cite{gen-hadamard} by the author with special attention to the equality conditions. In \cite[Theorem 1]{rank}, Lundquist and Barrett prove an analogous inequality involving the ranks of $A, A[\alpha], A[\alpha ^c]$ with multiplication replaced by addition as follows 
\begin{equation}
\mathrm{rank}(A) \le \mathrm{rank}(A[\alpha]) + \mathrm{rank}(A[\alpha ^c]).
\end{equation}

In this article, we strive to show that this analogy is not merely a coincidence but rather follows naturally from a more general inequality. Furthermore, we identify the necessary and sufficient conditions under which equality holds whereas these were not considered in \cite{rank}. This seems to be a common limitation in some of the literature on matrix-based inequalities where it is a formidable task to keep track of the equality conditions. The author is of the opinion that an algebro-analytic approach makes it more readily apparent how the equality conditions arise.

We set up some notation for the rest of the article. We denote by $\mathrm{tr}$, the usual trace of a matrix in $M_n(\mathbb{C})$, which yields the sum of the diagonal entries of the matrix. For us, $\mathscr{H}$ denotes a complex Hilbert space and $\mathcal{B}(\mathscr{H})$ denotes the set of bounded operators on $\mathscr{H}$.
 
\begin{definition}
Let $T$ be a bounded operator in $\mathcal{B}(\mathscr{H})$. The {\it range} of $T$ is the set $\{ Tx : x \in \mathscr{H} \} \subseteq \mathscr{H}$. The (orthogonal) projection onto the closure of the range of $T$ is called the {\it range projection} of $T$, and denoted by $\mathfrak{R}[T]$.
\end{definition}

\begin{remark}
\label{rmrk:ran_proj}
Note that $\mathfrak{R}[T] T = T$. In fact, $\mathfrak{R}[T]$ is the smallest projection $E$ in $\mathcal{B}(\mathscr{H})$ satisfying $ET=T$.
\end{remark}

A von Neumann algebra is a weak-operator closed self-adjoint subalgebra of $\mathcal{B}(\mathscr{H})$ containing the identity operator. Let $\mathscr{R}$ be a von Neumann algebra and $\mathscr{S}$ be a von Neumann subalgebra of $\mathscr{R}$. We recall that a conditional expectation $\Phi$ from $\mathscr{R}$ onto $\mathscr{S}$ is a unital idempotent $\mathscr{S}$-bimodule map where $\mathscr{R}$ is considered as a $\mathscr{S}$-bimodule via the usual left and right multiplication. The reader may wish to take a quick glance at the relevant subsections of \cite[\S 2]{gen-hadamard} for a quick refresher in the theory of von Neumann algebras, conditional expectations, operator monotone functions which are relevant to the present discussion. For von Neumann algebras $\mathscr{R}_1, \mathscr{R}_2$, a positive linear map $\Psi : \mathscr{R}_1 \mapsto \mathscr{R}_2$ is said to be {\it normal} if $\Psi(\sup_{\alpha \in I} H_{\alpha}) = \sup_{\alpha \in I} \Psi(H_{\alpha})$ for any increasing net of self-adjoint operators $H_{\alpha}$. We state the main result of this article below.

\begin{thm}
\label{thm:main}\textsl{Let $\mathscr{R}$ be a von Neumann algebra and $\mathscr{S}$ be a von Neumann subalgebra of $\mathscr{R}$. For a positive operator $A$ in $\mathscr{R}$ and a normal conditional expectation $\Phi$ from $\mathscr{R}$ onto $\mathscr{S}$, we have
\begin{equation}
\label{eqn:rank_main}
\Phi(\mathfrak{R}[A]) \le \mathfrak{R}[\Phi(A)]
\end{equation}
with equality if and only if $\mathfrak{R}[A] \in \mathscr{S}$.
}
\end{thm}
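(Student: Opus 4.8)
Throughout I would write $E := \mathfrak{R}[A]$ and $F := \mathfrak{R}[\Phi(A)]$, and record at the outset that $F \in \mathscr{S}$: since $\Phi(A)$ is a positive element of $\mathscr{S}$ and the range projection of a positive operator $B$ is the strong limit of the increasing net $f_n(B)$ with $f_n(t) = \min(nt,1)$, it lies in any von Neumann subalgebra containing $B$; in particular $\Phi(F)=F$. The whole argument rests on one elementary sublemma that I would prove first: if $0 \le X_n \nearrow X$ is an increasing net of positive operators, then $\mathfrak{R}[X_n] \nearrow \mathfrak{R}[X]$. The bound $\mathfrak{R}[X_n]\le \mathfrak{R}[X]$ is clear from monotonicity of range projections on positive operators; setting $P := \sup_n \mathfrak{R}[X_n]$, each $(1-P)X_n(1-P)=0$, so $(1-P)X(1-P)=0$ in the strong limit, which forces $X^{1/2}(1-P)=0$ and hence $\mathfrak{R}[X]\le P$.

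My plan for the inequality \eqref{eqn:rank_main} is to prove the sharper identity $\mathfrak{R}[\Phi(E)] = F$ and then conclude via $\Phi(E)\le \|\Phi(E)\|\,\mathfrak{R}[\Phi(E)] \le \mathfrak{R}[\Phi(E)] = F$, using that $\Phi$ is unital and positive so $\|\Phi(E)\|\le 1$. For the identity: from $A \le \|A\|\,E$ and positivity of $\Phi$ I get $\Phi(A)\le \|A\|\,\Phi(E)$, whence $F \le \mathfrak{R}[\Phi(E)]$. Conversely, with $f_n(t)=\min(nt,1)$ one has $f_n(A)\le nA$ and $f_n(A)\nearrow E$; positivity gives $\Phi(f_n(A))\le n\,\Phi(A)$, so $\mathfrak{R}[\Phi(f_n(A))]\le F$, while normality gives $\Phi(f_n(A))\nearrow \Phi(E)$ and hence, by the sublemma, $\mathfrak{R}[\Phi(f_n(A))]\nearrow \mathfrak{R}[\Phi(E)]$. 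Taking the supremum yields $\mathfrak{R}[\Phi(E)]\le F$. Note this entire half uses only normality and positivity, not faithfulness.

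For the equality assertion the forward direction is immediate: if $E\in\mathscr{S}$ then $\Phi(E)=E$, so $F=\mathfrak{R}[\Phi(E)]=\mathfrak{R}[E]=E=\Phi(E)$, and \eqref{eqn:rank_main} is an equality. The substantive direction is the converse, and this is where I expect the real work, and crucially where faithfulness of $\Phi$ must be invoked. (Without it the statement fails: for $\mathscr{R}=\mathbb{C}\oplus\mathbb{C}$, $\mathscr{S}=\mathbb{C}\,(1,1)$, $\Phi(a,b)=(a,a)$, and $A=(0,1)$, both sides of \eqref{eqn:rank_main} vanish yet $\mathfrak{R}[A]\notin\mathscr{S}$; so I read faithfulness as a standing hypothesis, as is customary for these results.) Assuming $\Phi$ faithful, suppose $\Phi(E)=F$. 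Since $1-F\in\mathscr{S}$, the bimodule property gives $\Phi\big((1-F)E(1-F)\big)=(1-F)\Phi(E)(1-F)=(1-F)F(1-F)=0$; as $(1-F)E(1-F)\ge 0$, faithfulness forces $(1-F)E(1-F)=0$, hence $E^{1/2}(1-F)=0$ and $E(1-F)=0$, i.e.\ $E\le F$. Then $F-E\ge 0$ and $\Phi(F-E)=\Phi(F)-\Phi(E)=F-F=0$, so faithfulness again gives $F=E$; in particular $E=F\in\mathscr{S}$, as desired.

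The main obstacle is precisely this converse: extracting from the single scalar-like equality $\Phi(E)=F$ (now a projection) the rigidity $E\in\mathscr{S}$, which I would engineer through two successive positivity-plus-faithfulness cancellations. The other delicate points are the monotone continuity of range projections and the interplay of normality with the functional-calculus approximation $f_n(A)\nearrow E$, both of which underlie the inequality.
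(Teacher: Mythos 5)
Your proof is correct, and it takes a genuinely different route from the paper's. For the inequality \eqref{eqn:rank_main}, the paper applies Jensen's operator inequality $\Phi(A^r)\le \Phi(A)^r$ for the operator monotone functions $t^r$, $0<r<1$ (which requires knowing that $\Phi$ is completely positive, via Brown--Ozawa), and then lets $r\to 0$ using the strong-operator convergence $A^r\to\mathfrak{R}[A]$ together with normality. You bypass L\"{o}wner theory and complete positivity entirely: your argument uses only positivity and normality of $\Phi$, the order comparisons $A\le\|A\|\,\mathfrak{R}[A]$ and $f_n(A)\le nA$ for $f_n(t)=\min(nt,1)$, and your sublemma that range projections respect monotone limits; in return you obtain the sharper identity $\mathfrak{R}[\Phi(\mathfrak{R}[A])]=\mathfrak{R}[\Phi(A)]$, of which \eqref{eqn:rank_main} is an immediate consequence. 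For the equality characterization, the paper isolates Lemma \ref{lem:main} ($\Phi(E)$ is a projection if and only if $\Phi(E)=E$), proved by a single faithfulness cancellation applied to $\Phi\bigl((E-\Phi(E))^2\bigr)=0$, while you run two successive positivity-plus-faithfulness cancellations: first $(1-F)E(1-F)=0$ to get $E\le F$, then $\Phi(F-E)=0$ to get $E=F$. The two arguments are of comparable length, with the paper's lemma arguably the more reusable statement. One further point is in your favor: you observe, with a concrete counterexample, that faithfulness of $\Phi$ is indispensable for the direction ``equality implies $\mathfrak{R}[A]\in\mathscr{S}$.'' The paper never lists faithfulness among its hypotheses (its definition of conditional expectation is only ``unital idempotent $\mathscr{S}$-bimodule map''), yet the proof of Lemma \ref{lem:main} explicitly invokes ``the faithfulness of $\Phi$''; your decision to flag faithfulness as a standing assumption makes explicit a hypothesis the paper uses silently.
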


For readers who are primarily interested in applications to matrix theory, it may be helpful to use the dictionary below to mentally substitute terminology for von Neumann algebras with the corresponding ones for the special case of finite-dimensional self-adjoint matrix algebras. 
\begin{table}[ht]
\centering
\begin{tabular}{|c|c|}
\hline
$\mathscr{H}$ & $\mathbb{C}^n$ \\
\hline $\mathscr{R}$ & $M_n(\mathbb{C})$ \\
\hline $\mathscr{S}$ &  \makecell{$M_{n_1}(\mathbb{C}) \oplus \cdots \oplus M_{n_k}(\mathbb{C}),$  \\ $ \sum_{i=1}^k n_i = n$} \\
\hline $\Phi$ & \makecell{$A \mapsto \mathrm{diag}(A_{11}, \cdots, A_{kk}),$ \\ $A_{ii} \in M_{n_i}(\mathbb{C}), 1\le i \le k$}\\
\hline
\end{tabular}
\label{tab:partial}
\end{table}

Further, one should keep in mind that the various topologies on a von Neumann algebra such as the norm topology, weak-operator topology, strong-operator topology are all equivalent on $M_n(\mathbb{C})$. In the proof of Theorem \ref{thm:main}, we use the normality of the conditional expectation $\Phi$ in a crucial manner but that assumption is superfluous in the context of $M_n(\mathbb{C})$.

\section{Proof of the Main Theorem}

We first compile observations and results that are necessary for our proof of Theorem \ref{thm:main}. There are various operator versions of Jensen's inequality in the literature (\cite{hansen_pedersen1}, \cite{hansen_pedersen2}) involving operator monotone functions and completely positive maps in the context of $C^*$-algebras. We note the relevant inequality below without making the effort to state it in its most general form.

\begin{thm}[Jensen's operator inequality]
\label{thm:opmon}
\textsl{Let $\mathfrak{A}$ be a $C^*$-algebra and $\Psi : \mathfrak{A} \mapsto \mathfrak{A}$ be a unital completely positive map. Let $f$ be an operator monotone function on the interval $[0, \infty)$. For a positive operator in $A$ in $\mathfrak{A}$, we have $$\Psi(f(A)) \le f(\Psi(A)).$$}
\end{thm}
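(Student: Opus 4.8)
The plan is to reduce the inequality, via L\"owner's integral representation of operator monotone functions, to a single elementary resolvent inequality, and then to establish that resolvent inequality directly from the $2$-positivity of $\Psi$ by a Schur complement argument. By L\"owner's theorem every operator monotone function $f$ on $[0,\infty)$ admits a representation
\begin{equation*}
f(t) = \alpha + \beta t + \int_{(0,\infty)} \frac{\lambda t}{\lambda + t}\ \ud\mu(\lambda),
\end{equation*}
where $\alpha = f(0) \in \mathbb{R}$, $\beta \ge 0$, and $\mu$ is a positive Borel measure on $(0,\infty)$ (with the usual integrability condition ensuring convergence). Since $\Psi$ is unital and linear, the affine part contributes equality, $\Psi(\alpha I + \beta A) = \alpha I + \beta \Psi(A)$. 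Each integrand $g_\lambda(t) := \frac{\lambda t}{\lambda + t} = \lambda - \lambda^2(\lambda + t)^{-1}$ is itself operator monotone, so it suffices to prove $\Psi(g_\lambda(A)) \le g_\lambda(\Psi(A))$ for each fixed $\lambda > 0$ and afterwards integrate against $\mu$.

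Fix $\lambda > 0$ and set $B := \lambda I + A$, which is positive and invertible since $A \ge 0$. Writing $g_\lambda(A) = \lambda I - \lambda^2 B^{-1}$ and $g_\lambda(\Psi(A)) = \lambda I - \lambda^2 \Psi(B)^{-1}$ (here $\Psi(B) = \lambda I + \Psi(A) \ge \lambda I$ is invertible because $\Psi(A) \ge 0$), one checks that the desired inequality $\Psi(g_\lambda(A)) \le g_\lambda(\Psi(A))$ is equivalent to the resolvent inequality
\begin{equation*}
\Psi(B)^{-1} \le \Psi(B^{-1}).
\end{equation*}
This is the crux of the argument and the only place where more than mere positivity of $\Psi$ is needed. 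I would prove it by a Schur complement: since
\begin{equation*}
\begin{pmatrix} B & I \\ I & B^{-1} \end{pmatrix} = \begin{pmatrix} B^{1/2} \\ B^{-1/2} \end{pmatrix}\begin{pmatrix} B^{1/2} & B^{-1/2} \end{pmatrix} \ge 0
\end{equation*}
in $M_2(\mathfrak{A})$, applying the $2$-positive map $\Psi$ entrywise (2-positivity follows a fortiori from complete positivity) and using $\Psi(I) = I$ yields
\begin{equation*}
\begin{pmatrix} \Psi(B) & I \\ I & \Psi(B^{-1}) \end{pmatrix} \ge 0.
\end{equation*}
As $\Psi(B)$ is positive and invertible, positivity of this block matrix is equivalent, by the Schur complement criterion, to $\Psi(B^{-1}) - \Psi(B)^{-1} \ge 0$, which is the claimed inequality.

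Finally I would integrate. Because a unital positive map is bounded (indeed contractive) and hence norm-continuous, $\Psi$ commutes with the norm-convergent operator-valued (Bochner) integral defining $f(A)$, so that
\begin{align*}
\Psi(f(A)) &= \alpha I + \beta \Psi(A) + \int_{(0,\infty)} \Psi(g_\lambda(A))\ \ud\mu(\lambda) \\
&\le \alpha I + \beta \Psi(A) + \int_{(0,\infty)} g_\lambda(\Psi(A))\ \ud\mu(\lambda) = f(\Psi(A)),
\end{align*}
where the inequality uses the per-$\lambda$ bound of the previous paragraph together with $\mu \ge 0$. I expect the main obstacle to be not the resolvent inequality, which the Schur complement dispatches cleanly, but rather the measure-theoretic bookkeeping of this last step: one must confirm that the integral representation converges in operator norm over the compact spectrum $[0,\|A\|]$ of $A$ and that interchanging $\Psi$ with the integral is legitimate. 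Once the boundedness and norm-continuity of $\Psi$ are invoked, this becomes routine and the proof is complete.
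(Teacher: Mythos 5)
Your proof is correct, but there is nothing in the paper to compare it against: the paper deliberately does not prove this theorem, quoting it from Hansen--Pedersen \cite{hansen_pedersen1, hansen_pedersen2} ``without making the effort to state it in its most general form.'' So the honest comparison is with the literature. Hansen and Pedersen's original arguments are dilation-theoretic, built on estimates of the form $f(C^*AC) \ge C^* f(A) C$ for contractions $C$ (or Stinespring-type dilations of $\Psi$), whereas you take the other classical route: L\"owner's representation $f(t) = \alpha + \beta t + \int_{(0,\infty)} \frac{\lambda t}{\lambda+t}\, d\mu(\lambda)$ reduces the claim, since the affine part passes through a unital map with equality, to the single resolvent inequality $\Psi(B)^{-1} \le \Psi(B^{-1})$ for $B = \lambda I + A$ --- which is Choi's inequality for unital $2$-positive maps, and your Schur-complement derivation of it (the block matrix with entries $B, I, I, B^{-1}$ is $T^*T$ for the row operator $T$ with entries $B^{1/2}, B^{-1/2}$, hence positive; apply $\Psi$ entrywise and take the Schur complement at the invertible corner $\Psi(B) \ge \lambda I$) is exactly right. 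What your route buys is self-containedness at minimal cost: it uses only $2$-positivity rather than full complete positivity, and it dovetails with the paper, which immediately after the theorem invokes precisely this integral representation for $t^r$, $0<r<1$ --- so your argument makes the paper's citation effectively self-supporting with tools the paper already introduces. The one step you flagged as ``bookkeeping'' is indeed routine but worth one line: for $t \in [0, \|A\|]$ one has $\frac{\lambda t}{\lambda + t} \le \min(\lambda, t) \le 2\max(1,\|A\|)\,\frac{\lambda}{1+\lambda}$, so the standard integrability condition $\int_{(0,\infty)} \frac{\lambda}{1+\lambda}\, d\mu(\lambda) < \infty$ gives norm convergence of the integral uniformly over the spectrum, and the interchange with the bounded (indeed contractive, since $\Psi$ is unital and positive) map $\Psi$ is then legitimate.
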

Note that the necessary and sufficient conditions for equality are not obviously apparent without further assumptions on $\Psi$.

Using the integral representation of operator monotone functions (see \cite{lowner}), and noting that
$$t^r = \frac{\sin r \pi }{ \pi } \int_{0}^{\infty} \frac{(1 + \lambda ) t}{\lambda + t} \frac{\lambda ^{r-1 } }{1 + \lambda} \; d\lambda, r \in (0, 1)$$
we have that $t^r$ is an operator monotone function on $[0, \infty)$ for $0 < r < 1$. As a corollary of the preceding theorem, we have the following result.

\begin{cor}
\label{cor:power}
\textsl{
Let $\mathfrak{A}$ be a $C^*$-algebra and $\Psi : \mathfrak{A} \mapsto \mathfrak{A}$ be a completely positive map. Let $f$ be an operator monotone function on the interval $[0, \infty)$ and $r \in (0, 1)$. For a positive operator in $A$ in $\mathfrak{A}$, we have
\begin{equation}
\label{eqn:main}
\Psi(A^r) \le \Psi(A)^r.
\end{equation}
}
\end{cor}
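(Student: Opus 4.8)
The plan is to read the inequality off from Theorem~\ref{thm:opmon} once the correct operator monotone function is identified. The computation immediately preceding the corollary shows that $f(t) = t^r$ is operator monotone on $[0,\infty)$ for $r \in (0,1)$, so if $\Psi$ is in addition \emph{unital}, then \eqref{eqn:main} is exactly the instance $\Psi(f(A)) \le f(\Psi(A))$ of Jensen's operator inequality with $f(t) = t^r$. Thus for unital completely positive maps there is essentially nothing to prove beyond invoking the theorem, and the actual content of the corollary lies in relaxing the unitality hypothesis. (The function $f$ appearing in the statement plays no role in the conclusion and I would drop it.)

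The main obstacle is to pass from unital to merely completely positive $\Psi$, and here one must be careful: the inequality fails for a general completely positive map, as the rescaling $\Psi = c\cdot\mathrm{id}$ with $c > 1$ shows (it would force the scalar inequality $c \le c^r$, false for $c>1$). What is true, and what suffices for the application to a conditional expectation where $\Psi(1)=1$, is the sub-unital case $\Psi(1) \le 1$. To handle it I would use a standard $2\times 2$ dilation. Working, if necessary, in the unitization of $\mathfrak{A}$ and writing $E = (1 - \Psi(1))^{1/2}$, define $\widehat{\Psi} : M_2(\mathfrak{A}) \to \mathfrak{A}$ by
\[
\widehat{\Psi}\begin{pmatrix} a & b \\ c & d \end{pmatrix} = \Psi(a) + E\,d\,E .
\]
Each summand is a composition of a completely positive corner compression with either $\Psi$ or conjugation by $E$, so $\widehat{\Psi}$ is completely positive, and $\widehat{\Psi}(1) = \Psi(1) + E^2 = \Psi(1) + (1 - \Psi(1)) = 1$, so $\widehat{\Psi}$ is unital.

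With the dilation in place I would apply the unital case already established to $\widehat{\Psi}$ and to the positive element $\widehat{A} = \mathrm{diag}(A, 0) \in M_2(\mathfrak{A})$. Since $0^r = 0$ we have $\widehat{A}^r = \mathrm{diag}(A^r, 0)$, whence $\widehat{\Psi}(\widehat{A}^r) = \Psi(A^r)$ and $\widehat{\Psi}(\widehat{A}) = \Psi(A)$; feeding these into $\widehat{\Psi}(\widehat{A}^r) \le \widehat{\Psi}(\widehat{A})^r$ returns precisely \eqref{eqn:main}. I expect the only genuinely delicate points to be the unital reduction above and the attendant need to record the correct sub-unital hypothesis on $\Psi$; by contrast, the operator monotonicity of $t^r$ and the bare application of Theorem~\ref{thm:opmon} are routine. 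Since the corollary is invoked downstream only for the normal conditional expectation $\Phi$, where $\Phi(1)=1$, the dilation step is in fact vacuous in the application, and one may there simply quote the unital case.
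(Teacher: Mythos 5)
Your proof is correct, and it in fact does more than the paper. The paper's own derivation of Corollary \ref{cor:power} is exactly the one-liner in your first paragraph: the integral representation shows $t \mapsto t^r$ is operator monotone on $[0,\infty)$ for $r \in (0,1)$, and the inequality is then read off from Theorem \ref{thm:opmon} with $f(t) = t^r$; no further argument is given. In particular the paper, like your unital case, implicitly needs $\Psi(1)=1$ --- Theorem \ref{thm:opmon} is stated only for \emph{unital} completely positive maps --- yet the corollary's hypotheses say only ``completely positive.'' You are right that this is not a harmless omission: your rescaling counterexample $\Psi = c\,\mathrm{id}$ with $c>1$ shows the statement is false as literally written, so the corollary should carry the hypothesis $\Psi(1)=1$ (or $\Psi(1)\le 1$). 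Your dilation argument correctly repairs the sub-unital case: $\widehat{\Psi}$ is unital and completely positive, and since $0^r=0$ nothing is lost in passing to $\widehat{A}=\mathrm{diag}(A,0)$, so unital Jensen applied to $\widehat{\Psi}$ returns precisely \eqref{eqn:main}. (One cosmetic point: $\widehat{\Psi}$ maps $M_2(\mathfrak{A})$ to $\mathfrak{A}$, whereas Theorem \ref{thm:opmon} as printed takes $\Psi : \mathfrak{A} \to \mathfrak{A}$; either invoke the Hansen--Pedersen inequality for maps between distinct $C^*$-algebras --- the paper explicitly disclaims stating the most general form --- or replace $\widehat{\Psi}$ by $X \mapsto \widehat{\Psi}(X)\oplus\widehat{\Psi}(X)$ so that domain and codomain agree.) You are also right that the function $f$ in the corollary's statement is vestigial, and that in the only downstream use, where $\Psi = \Phi$ is a conditional expectation, unitality holds automatically and the dilation step is vacuous. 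In short: same core approach as the paper for the unital case, plus a correct diagnosis and repair of a genuine defect in the paper's statement.
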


The final piece we need is \cite[Lemma 5.1.5]{kadison-ringrose} which we paraphrase below.
\begin{lem}
\label{lem:lim_range}
\textsl{
For a positive operator $A$ in $\mathcal{B}(\mathscr{H})$ such that $A \le I$, we have $$ \lim_{r \rightarrow 0} A^r = \mathfrak{R}[A]$$ in the strong-operator topology.
}
\end{lem}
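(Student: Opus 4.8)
The plan is to reduce the assertion to a scalar convergence statement via the spectral theorem and then to invoke dominated convergence. Since $A$ is positive with $A \le I$, its spectrum is contained in $[0,1]$, and the spectral theorem furnishes a projection-valued measure $E$ on the Borel subsets of $[0,1]$ with $A = \int_{[0,1]} \lambda \, dE(\lambda)$. For $r \in (0,1)$ the functional calculus gives $A^r = \int_{[0,1]} \lambda^r \, dE(\lambda)$, where $\lambda \mapsto \lambda^r$ is continuous on $[0,1]$ with value $0$ at the origin. As $r \to 0^+$ this integrand converges pointwise to the indicator $\chi_{(0,1]}$, since $0^r = 0$ for every $r > 0$ while $\lambda^r \to 1$ for each $\lambda \in (0,1]$. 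The candidate limit is therefore $\chi_{(0,1]}(A) = E((0,1]) = I - E(\{0\})$. As $A$ is a positive (hence self-adjoint) operator, $E(\{0\})$ is the projection onto $\ker A = (\operatorname{ran} A)^{\perp}$, so $I - E(\{0\})$ is precisely the projection onto $\overline{\operatorname{ran} A}$, which is $\mathfrak{R}[A]$.

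It remains to upgrade this pointwise scalar convergence to strong-operator convergence. Fix $x \in \mathscr{H}$ and let $\mu_x$ be the finite positive Borel measure on $[0,1]$ defined by $\mu_x(\Delta) = \langle E(\Delta)x, x\rangle = \norm{E(\Delta)x}^2$, which has total mass $\norm{x}^2$. Writing $g_r(\lambda) = \lambda^r - \chi_{(0,1]}(\lambda)$, we have $g_r(A) = A^r - \mathfrak{R}[A]$, so the spectral calculus identity $\norm{g_r(A)x}^2 = \int_{[0,1]} \lvert g_r \rvert^2 \, d\mu_x$ gives
$$\norm{A^r x - \mathfrak{R}[A]x}^2 = \int_{[0,1]} \lvert \lambda^r - \chi_{(0,1]}(\lambda)\rvert^2 \, d\mu_x(\lambda).$$
For $\lambda \in [0,1]$ and $r \in (0,1)$ we have $0 \le \lambda^r \le 1$, so the integrand is bounded above by the $\mu_x$-integrable constant $1$ and tends to $0$ pointwise by the preceding paragraph. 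Dominated convergence then yields $\norm{A^r x - \mathfrak{R}[A]x} \to 0$ as $r \to 0^+$. Since $x \in \mathscr{H}$ is arbitrary, this is exactly the assertion that $A^r \to \mathfrak{R}[A]$ in the strong-operator topology.

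The substantive step, and the one I expect to require the most care, is the passage from the pointwise behaviour of the scalars $\lambda^r$ to operator convergence. This is genuinely a strong-operator (not norm) statement: the limiting function $\chi_{(0,1]}$ has a jump at the origin, so the convergence $\lambda^r \to \chi_{(0,1]}$ fails to be uniform near $\lambda = 0$ and no naive norm estimate is available; the spectral measure together with dominated convergence is exactly what is needed to localise the defect at $\{0\}$ against the fixed finite measure $\mu_x$. A secondary point worth recording cleanly is the identification $\chi_{(0,1]}(A) = \mathfrak{R}[A]$, namely that for a positive operator the spectral projection away from $0$ coincides with the range projection; this rests on $\ker A = (\operatorname{ran} A)^{\perp}$ together with the fact that $E(\{0\})$ is the projection onto $\ker A$.
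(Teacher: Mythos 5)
Your proof is correct. Note that the paper itself does not prove this lemma at all: it is quoted as a paraphrase of Lemma 5.1.5 of Kadison--Ringrose, so you have supplied an argument where the paper relies on a citation. Your route --- represent $A$ by its spectral measure $E$, observe $\lambda^r \to \chi_{(0,1]}(\lambda)$ pointwise on $[0,1]$, identify $\chi_{(0,1]}(A) = I - E(\{0\}) = \mathfrak{R}[A]$ via $\ker A = (\operatorname{ran} A)^{\perp}$, and upgrade to strong-operator convergence by dominated convergence against the finite measure $\mu_x(\Delta) = \langle E(\Delta)x, x\rangle$ --- is sound at every step, and you correctly diagnose where the real content lies (the jump of $\chi_{(0,1]}$ at $0$ rules out norm convergence, so a vector-by-vector measure-theoretic argument is genuinely needed). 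The Kadison--Ringrose proof the paper leans on is organized differently: for $0 \le \lambda \le 1$ the map $r \mapsto \lambda^r$ is monotone increasing as $r$ decreases, so $(A^{1/n})_n$ is an increasing sequence of positive operators bounded by $I$; it therefore has a strong-operator limit $P$ by the standard monotone-net lemma, and one then checks $P^2 = P$, $PA = A$, and minimality to conclude $P = \mathfrak{R}[A]$. That argument trades your dominated convergence theorem for operator-monotonicity and fits the paper's later use of normality of $\Phi$ (which is stated for increasing nets); your version is more self-contained and localizes the analysis cleanly at the spectral point $\{0\}$. One cosmetic point: dominated convergence is a sequential theorem, while the lemma asserts a limit over the continuous parameter $r \to 0^{+}$; this is harmless since your bound is uniform in $r$ and convergence along every sequence $r_n \downarrow 0$ gives the continuous limit (or note that $A^r$ is monotone in $r$), but a one-line remark to that effect would make the write-up airtight.
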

The assumption that $A \le I$ in the above lemma may be relaxed by noting that for a positive operator $A$, the operator $\frac{1}{\|A\|}A \le I, \lim_{r \rightarrow 0} \|A\|^r = 1$, and $\mathfrak{R}[A] = \mathfrak{R}[(\frac{1}{\|A\|}A)]$. Thus for any positive operator $A$ in $\mathcal{B}(\mathscr{H})$, we observe that in the strong-operator topology, $\lim_{r \rightarrow 0} A^r$ is the range projection of $A$. Furthermore as von Neumann algebras are closed in the strong-operator topology, if $A \in \mathscr{R}$, then $\mathfrak{R}[A]$ is also in $\mathscr{R}$.

\begin{lem}
\label{lem:main}
\textsl{Let $\mathscr{R}$ be a von Neumann algebra and $\mathscr{S}$ be a von Neumann subalgebra of $\mathscr{R}$. For a projection $E$ in $\mathscr{R}$ and a conditional expectation $\Phi$  from $\mathscr{R}$ onto $\mathscr{S}$,  $\Phi(E)$ is a projection if and only if $\Phi(E) = E$.}
\end{lem}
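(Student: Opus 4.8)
The easy direction is immediate: if $\Phi(E) = E$ then $\Phi(E)$ is a projection because $E$ is. So the plan is to assume that $F := \Phi(E)$ is a projection and deduce $\Phi(E) = E$. Since $\Phi$ maps into $\mathscr{S}$ we have $F \in \mathscr{S}$, and since $E = E^*$ and $\Phi$ is adjoint-preserving, $F = F^*$; by assumption $F = F^2$. I also record that $\Phi(F) = F$, either from idempotence of $\Phi$ or from the unital bimodule property applied to $F = F \cdot I$.

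The heart of the argument is the single identity $\Phi((E-F)^2) = 0$. To obtain it I would expand, using $E^2 = E$ and $F^2 = F$, the self-adjoint element $(E-F)^2 = E - EF - FE + F$, apply $\Phi$, and exploit the $\mathscr{S}$-bimodule property together with $F \in \mathscr{S}$: this gives $\Phi(EF) = \Phi(E)F = F\cdot F = F$ and $\Phi(FE) = F\,\Phi(E) = F$. Combining these with $\Phi(E) = F$ and $\Phi(F) = F$ yields $\Phi((E-F)^2) = F - F - F + F = 0$. (Equivalently, this is the equality case of the Kadison--Schwarz inequality $\Phi(E^*E) \ge \Phi(E)^*\Phi(E)$, which here collapses to $F \ge F$, since $E^*E = E$ and $F^2 = F$.)

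It remains to pass from $\Phi((E-F)^2) = 0$ to $E = F$. Since $E - F$ is self-adjoint, $(E-F)^2 = (E-F)^*(E-F) \ge 0$, so faithfulness of $\Phi$ forces $(E-F)^2 = 0$, and self-adjointness of $E - F$ then gives $E - F = 0$, that is, $\Phi(E) = E$. I expect this last step to be the crux: the bimodule computation above is valid for \emph{every} conditional expectation, but the implication genuinely requires faithfulness of $\Phi$ (without it a non-faithful expectation can send a nonzero projection to $0$, and the conclusion fails). Thus the essential content is that faithfulness is exactly what converts the equality $\Phi((E-F)^2)=0$ into $E=F$; identifying and invoking that hypothesis, rather than any calculation, is the real obstacle.
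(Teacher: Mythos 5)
Your proof is essentially the paper's own: the paper likewise expands $(E-\Phi(E))^2$, applies $\Phi$ using the $\mathscr{S}$-bimodule property to obtain $\Phi\bigl((E-\Phi(E))^2\bigr) = \Phi(E)^2 - \Phi(E^2) = 0$, and then concludes $E = \Phi(E)$ from faithfulness and self-adjointness of $E - \Phi(E)$. Your closing observation is well taken: faithfulness is indeed indispensable (your counterexample idea of an expectation killing a nonzero projection is correct), and the paper's proof invokes it too, even though faithfulness is not listed among the lemma's stated hypotheses --- so that caveat points to an implicit assumption in the paper rather than a defect in your argument.
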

\begin{proof}
If $\Phi(E)$ is a projection, then we have that $\Phi(E)^2 = \Phi(E) = \Phi(E^2)$. Applying $\Phi$ to the positive operator $(E - \Phi(E))^2$, we see that $\Phi((E- \Phi(E))^2) = \Phi(E^2 - E\Phi(E) - \Phi(E)E + \Phi(E)^2) = \Phi(E)^2 - \Phi(E^2) = 0$. From the faithfulness of $\Phi$, we conclude that $(E - \Phi(E))^2 = 0$ which implies that $\Phi(E) = E$. The converse is straightforward.
\end{proof}

\begin{proof}[Proof of Theorem \ref{thm:main}]
By \cite[Theorem 1.5.10]{brown_ozawa}, $\Phi$ is a completely positive map. From inequality (\ref{eqn:main}), we have that $\Phi(A^r) \le \Phi(A)^r$ for $0 < r < 1$. Using the normality of $\Phi$ and taking limits in the strong-operator topology as $r \rightarrow 0$, we observe that $\Phi(\mathfrak{R}[A]) = \Phi(\lim_{r \rightarrow 0} A^r) = \lim_{r \rightarrow 0} \Phi(A^r) \le \lim_{r \rightarrow 0} \Phi(A)^r = \mathfrak{R}[\Phi(A)]$.  Next we prove the necessity and sufficiency of the equality condition. As $\mathfrak{R}[A], \mathfrak{R}[\Phi(A)]$ are projections in $\mathscr{R}$, using Lemma \ref{lem:main} we see that $\Phi(\mathfrak{R}[A]) = \mathfrak{R}[\Phi(A)] \Rightarrow \Phi(\mathfrak{R}[A]) = \mathfrak{R}[A] \Rightarrow \mathfrak{R}[A] \in \mathscr{S}$. Conversely, if $\mathfrak{R}[A] \in \mathscr{S}$, as $\mathfrak{R}[A] A = A$ and $\Phi$ is a $\mathscr{S}$-bimodule map,  we have $\Phi(\mathfrak{R}[A]A) = \Phi(A) \Rightarrow \mathfrak{R}[A] \Phi(A) = \Phi(A) $. In the vein of Remark \ref{rmrk:ran_proj}, for any operator $T \in \mathscr{R}$, there is a smallest projection $E$ in $\mathscr{R}$ such that $ET = T$ and it is given by $\mathfrak{R}[T]$. We conclude that $\mathfrak{R}[\Phi(A)] \le \mathfrak{R}[A] = \Phi(\mathfrak{R}[A])$ and thus $\Phi(\mathfrak{R}[A]) =  \mathfrak{R}[\Phi(A)].$ 
\end{proof}

\section{Applications}

In this section, we apply Theorem \ref{thm:main} in the setting of a {\it finite} von Neumann algebra $\mathscr{R}$. Let $\tau$ denote the unique faithful and normal center-valued trace on $\mathscr{R}$. 

\begin{remark}
The column rank of an orthogonal projection in $M_n(\mathbb{C})$ is equal to its (unnormalized) trace. This follows from the fact that the range of an orthogonal projection is the eigenspace associated with the eigenvalue $1$ and the trace is the multiplicity of the eigenvalue $1$. Thus the rank of a matrix in $M_n(\mathbb{C})$ is equal to the trace of its range projection. 
\end{remark}

The above observation forms the basis of the definition of the rank of an operator $T$ in $\mathscr{R}$ as noted below.

\begin{definition}
For an operator $T$ in $\mathscr{R}$, we call $\tau(\mathfrak{R}[T])$,  which is a positive operator in the center $\mathscr{Z}$, the {\it center-valued rank} or simply {\it rank} of $T$ and denote it by $\mathfrak{r}(T)$. 
\end{definition}
The center-valued rank was first introduced in \cite{tr_rank} as tr-rank and studied in further detail in \cite{nayak_rank_id} by the author. In the case of factors, as the center is isomorphic to $\mathbb{C}$, we abuse notation and consider $\mathfrak{r}$ as a positive real-valued function. Note that the usual rank of a matrix $A$ in $M_n(\mathbb{C})$ is $n \cdot \mathfrak{r}(A)$. 

\begin{cor}
\label{cor:cv_rank}
\textsl{ Let $\mathscr{S}$ be a von Neumann subalgebra of $\mathscr{R}$ and $\Phi : \mathscr{R} \mapsto \mathscr{S}$ be a $\tau$-preserving normal conditional expectation onto $\mathscr{S}$. For a positive operator $A$ in $\mathscr{R}$, we have the following inequality,
\begin{equation*}
\mathfrak{r}(A) \le \mathfrak{r}(\Phi(A))
\end{equation*}
with equality if and only if $\mathfrak{R}[A]$ is in $\mathscr{S}$.
}
\end{cor}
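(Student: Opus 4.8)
The plan is to deduce Corollary \ref{cor:cv_rank} directly from Theorem \ref{thm:main} by applying the center-valued trace $\tau$ to the operator inequality \eqref{eqn:rank_main}. First I would invoke Theorem \ref{thm:main} to obtain $\Phi(\mathfrak{R}[A]) \le \mathfrak{R}[\Phi(A)]$ in $\mathscr{R}$. Since $\tau$ is a positive (indeed faithful and normal) map, applying it to both sides preserves the inequality, giving $\tau(\Phi(\mathfrak{R}[A])) \le \tau(\mathfrak{R}[\Phi(A)])$. By the definition of the center-valued rank, the right-hand side is precisely $\mathfrak{r}(\Phi(A))$.

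The key step is to recognize why the left-hand side collapses to $\mathfrak{r}(A) = \tau(\mathfrak{R}[A])$. Here I would use the hypothesis that $\Phi$ is $\tau$-preserving, i.e. $\tau \circ \Phi = \tau$ on $\mathscr{R}$. Applying this with the projection $\mathfrak{R}[A]$ in place of a general operator yields $\tau(\Phi(\mathfrak{R}[A])) = \tau(\mathfrak{R}[A]) = \mathfrak{r}(A)$. Combining the two displays gives the desired inequality $\mathfrak{r}(A) \le \mathfrak{r}(\Phi(A))$.

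For the equality condition, I would track when the inequality $\tau(\Phi(\mathfrak{R}[A])) \le \tau(\mathfrak{R}[\Phi(A)])$ becomes an equality. The inequality at the level of $\tau$ arises by applying $\tau$ to the positive operator $\mathfrak{R}[\Phi(A)] - \Phi(\mathfrak{R}[A]) \ge 0$. Since $\tau$ is faithful, $\tau$ annihilates this positive operator if and only if the operator itself is zero, i.e. if and only if $\Phi(\mathfrak{R}[A]) = \mathfrak{R}[\Phi(A)]$. By Theorem \ref{thm:main}, this operator equality holds exactly when $\mathfrak{R}[A] \in \mathscr{S}$, which is precisely the stated equality condition.

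I do not anticipate any serious obstacle here, as the result is essentially an application of $\tau$ to an already-established operator inequality; the only point requiring care is ensuring that the $\tau$-preserving hypothesis is used to identify $\tau(\Phi(\mathfrak{R}[A]))$ with $\mathfrak{r}(A)$ and that faithfulness of $\tau$ is invoked to transfer the equality condition cleanly from the scalar (center-valued) level back to the operator level. One subtlety worth noting is that $\tau$ is center-valued rather than scalar-valued, so ``positivity'' and ``faithfulness'' should be understood in the appropriate center-valued sense; both properties hold by the standard theory of the center-valued trace on a finite von Neumann algebra.
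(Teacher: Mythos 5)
Your proposal is correct and follows essentially the same route as the paper: apply the $\tau$-preserving property to rewrite $\mathfrak{r}(A) = \tau(\Phi(\mathfrak{R}[A]))$, use Theorem \ref{thm:main} together with positivity of $\tau$ to get the inequality, and use faithfulness of $\tau$ to reduce the equality case to the operator equality $\Phi(\mathfrak{R}[A]) = \mathfrak{R}[\Phi(A)]$, which Theorem \ref{thm:main} characterizes as $\mathfrak{R}[A] \in \mathscr{S}$. No gaps; your remarks on center-valued positivity and faithfulness are exactly the right points of care.
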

\begin{proof}
As $\Phi$ is $\tau$-preserving, by inequality (\ref{eqn:rank_main}) we have that $\mathfrak{r}(A) = \tau(\mathfrak{R}[A]) = \tau(\Phi(\mathfrak{R}[A]))\le \tau(\mathfrak{R}[\Phi(A)]) = \mathfrak{r}(\Phi(A))$. Using the faithfulness of $\tau$, we conclude that  $\mathfrak{r}(A) = \mathfrak{r}(\Phi(A)) \Leftrightarrow \Phi(\mathfrak{R}[A]) = \mathfrak{R}[\Phi(A)] \Leftrightarrow \mathfrak{R}[A] \in \mathscr{S}$.
\end{proof}

\begin{cor}
\textsl{For a partition of $\langle n \rangle $ into indexing sets $\alpha_1, \cdots, \alpha_k$, we have the following inequality,
$$\mathrm{rank}(A) \le \sum_{i=1}^k \mathrm{rank}(A[\alpha_i])$$
with equality if and only if  $\mathfrak{R}[A] \in M_{n_1}(\mathbb{C}) \oplus \cdots \oplus M_{n_k}
(\mathbb{C})$ {\it i.e.}\ $\mathfrak{R}[A][\alpha_{\ell}, \alpha_m] = [0 \cdots ]$ for $\ell \ne m, 1 \le \ell, m \le k.$}
\end{cor}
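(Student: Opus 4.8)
The plan is to realize this as the matrix instance of Corollary~\ref{cor:cv_rank}. First I would invoke the dictionary of the introductory table with $\mathscr{R} = M_n(\mathbb{C})$, a type $\mathrm{I}_n$ factor whose center is $\mathbb{C}I$; its center-valued trace $\tau$ is then the normalized trace $\tfrac{1}{n}\mathrm{tr}$, so that $\mathfrak{r}(T) = \tau(\mathfrak{R}[T]) = \tfrac{1}{n}\mathrm{rank}(T)$, in agreement with the earlier remark that $\mathrm{rank}(A) = n\cdot\mathfrak{r}(A)$. For the partition $\alpha_1, \dots, \alpha_k$ of $\langle n \rangle$ with $|\alpha_i| = n_i$, I would take $\mathscr{S} = M_{n_1}(\mathbb{C}) \oplus \cdots \oplus M_{n_k}(\mathbb{C})$ to be the block-diagonal subalgebra consisting of those $X \in M_n(\mathbb{C})$ with $X[\alpha_\ell, \alpha_m] = 0$ for $\ell \ne m$, and $\Phi(X) = \mathrm{diag}(X[\alpha_1], \dots, X[\alpha_k])$ the block-diagonal truncation.

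Next I would check that $\Phi$ is a $\tau$-preserving normal conditional expectation onto $\mathscr{S}$, so that Corollary~\ref{cor:cv_rank} is applicable. Normality is automatic, since all the relevant topologies coincide on $M_n(\mathbb{C})$, and unitality, idempotence, and positivity of $\Phi$ are immediate. The $\mathscr{S}$-bimodule identity $\Phi(SXS') = S\Phi(X)S'$ follows from block multiplication: for $S, S' \in \mathscr{S}$ the off-diagonal blocks vanish, whence $(SXS')[\alpha_i] = S[\alpha_i]\,X[\alpha_i]\,S'[\alpha_i]$. Finally, $\Phi$ is $\tau$-preserving because $\mathrm{tr}$ only reads diagonal entries, all of which survive the truncation, giving $\mathrm{tr}(\Phi(X)) = \sum_i \mathrm{tr}(X[\alpha_i]) = \mathrm{tr}(X)$.

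With these hypotheses in hand, I would apply Corollary~\ref{cor:cv_rank} to the positive matrix $A$ to obtain $\mathfrak{r}(A) \le \mathfrak{r}(\Phi(A))$, with equality exactly when $\mathfrak{R}[A] \in \mathscr{S}$. Multiplying by $n$ turns this into $\mathrm{rank}(A) \le \mathrm{rank}(\Phi(A))$, and the elementary fact that the rank of a block-diagonal matrix is the sum of the ranks of its diagonal blocks yields $\mathrm{rank}(\Phi(A)) = \sum_{i=1}^k \mathrm{rank}(A[\alpha_i])$, which is the asserted inequality. The equality condition $\mathfrak{R}[A] \in \mathscr{S}$ is, by the very definition of $\mathscr{S}$, precisely the stated condition that $\mathfrak{R}[A][\alpha_\ell, \alpha_m] = 0$ for all $\ell \ne m$.

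I do not expect a substantive obstacle, as the result is a faithful translation of the general theorem into matrix language. The only points demanding care are verifying that the block-diagonal truncation is genuinely a $\tau$-preserving conditional expectation and recording the additivity of rank over block-diagonal summands, both routine in finite dimensions.
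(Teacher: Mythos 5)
Your proposal is correct and follows exactly the paper's own route: instantiate Corollary~\ref{cor:cv_rank} with $\tau = \tfrac{1}{n}\mathrm{tr}$, the block-diagonal truncation $\Phi(A) = \oplus_{i=1}^k A[\alpha_i]$ as the $\tau$-preserving conditional expectation onto $M_{n_1}(\mathbb{C}) \oplus \cdots \oplus M_{n_k}(\mathbb{C})$, and the additivity of rank over diagonal blocks. The only difference is that you spell out the routine verifications (bimodule property, trace preservation, normality in finite dimensions) that the paper leaves implicit.
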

\begin{proof}
Let $n_i := \# (\alpha_i), 1 \le i \le k$ and let $\tau := \frac{\mathrm{tr(\cdot)}}{n}$ be the unique tracial state on $M_n(\mathbb{C})$. Recall that the usual rank is a scaling of $\mathfrak{r}$ by $n$. The map $\Phi : M_n(\mathbb{C}) \mapsto M_{n_1}(\mathbb{C}) \oplus \cdots \oplus M_{n_k}
(\mathbb{C})$ given by $\Phi(A) = \oplus_{i=1}^k A[\alpha_i]$ is a $\tau$-preserving conditional expectation from $M_n(\mathbb{C})$ onto $M_{n_1}(\mathbb{C}) \oplus \cdots \oplus M_{n_k}(\mathbb{C})$. As $\mathrm{rank}(\oplus_{i=1}^k A[\alpha_i]) = \sum_{i=1}^k \mathrm{rank}(A[\alpha_i])$, we obtain the required result using Corollary \ref{cor:cv_rank}.
\end{proof}

Note that for a matrix in $M_n(\mathbb{R})$, the rank does not depend on whether we consider it as an element of $M_n(\mathbb{R})$ or $M_n(\mathbb{C})$ since a collection of vectors in $\mathbb{R}^n \subset \mathbb{C}^n$ are $\mathbb{R}$-linearly independent if and only if they are $\mathbb{C}$-linearly independent. Further from Lemma \ref{lem:lim_range}, it is clear that if $A$ has real entries, then $\mathfrak{R}[A]$ has real entries. Thus the above result also holds for positive semi-definite matrices in $M_n(\mathbb{R})$. 

\begin{remark}
In Theorem \ref{thm:opmon}, using the operator monotone function {\it log} yields the Hadamard-Fischer determinant inequality as noted in \cite{gen-hadamard}, whereas in this article using the operator monotone functions $t^r, 0 < r < 1$ (and a limiting procedure) we have obtained the rank inequality in \cite[Theorem 1 (6)]{rank}. This explains their similar form.

An anonymous reviewer has kindly pointed out that these inequalities follow from the classical concavity of the functions {\it log}, $t \mapsto t^r (0 \le r \le 1)$ and the use of the operator monotone property (which coincides with concavity in this situation) is superfluous. The author agrees with the former observation but is more open-minded about the latter observation. This stems from a search for the appropriate unification of the Hadamard-Fischer-Koteljanskii determinant inequality and the other analogous Lundquist-Barrett rank inequalities via commuting conditional expectations (which are not communicated in this short paper and part of future work).    
\end{remark}

\end{document}